  \theoremstyle{plain}
    \newtheorem{thm}{Theorem}[section]
    \newtheorem{prop}[thm]{Proposition}
    \newtheorem{subsec}[thm]{}
\theoremstyle{definition}
    \newtheorem{defn}[thm]{Definition}
        \newtheorem{remark}[thm]{Remark}
\theoremstyle{remark}
\title{}
\author{}
\date{}
\begin{document}
\title{Crossed extensions of Lie algebras}
\author{Apurba Das}
\address{Department of Mathematics and Statistics,
Indian Institute of Technology, Kanpur 208016, Uttar Pradesh, India}
\email{apurbadas348@gmail.com}

\subjclass[2010]{17B56, 17B55, 17A32}
\keywords{Lie algebras, Chevalley-Eilenberg cohomology, Crossed modules, Crossed extensions.}

\begin{abstract}
It is known that Hochschild cohomology groups are represented by crossed extensions of associative algebras. In this paper, we introduce crossed $n$-fold extensions of a Lie algebra $\mathfrak{g}$ by a module $M$, for $n \geq 2$. The equivalence classes of such extensions are represented by the $(n+1)$-th Chevalley-Eilenberg cohomology group $H^{n+1}_{CE} (\mathfrak{g}, M).$
\end{abstract}

\noindent

\thispagestyle{empty}

\maketitle


\vspace{0.2cm}

\section{Introduction}
Group cohomology of a group $G$ with coefficients in a $G$-module $M$ is related to crosses extensions of $G$ by $M$. More precisely, equivalence classes of crossed $n$-fold extensions of $G$ by $M$ are classified by the $(n+1)$-th group cohomology $H^{n+1}(G,M)$ \cite{hueb}. A similar result for Hochschild cohomology was considered by Baues and Minian \cite{baues-minian}. Namely, they introduce crossed $n$-fold extensions of an associative algebra $A$ by an $A$-bimodule $M$ and prove that equivalence classes of such extensions are isomorphic to the $(n+1)$-th Hochschild cohomology $HH^{n+1}(A,M)$ as abelian groups. A further generalization of this result has been obtained in \cite{baues-minian-Richter}.

The cohomology of a Lie algebra $\mathfrak{g}$ with coefficients in a $\mathfrak{g}$-module $M$ is given by the Chevalley-Eilenberg cohomology. This cohomology theory controls the deformation of a given Lie algebra structure. The Chevalley-Eilenberg cohomology groups $H^\bullet_{CE} (\mathfrak{g}, M)$ are also related to special type of $L_\infty$-algebras \cite[Theorem 6.7]{baez-crans}. In this paper, we give another interpretation of Chevalley-Eilenberg cohomology in terms of crossed extensions. The idea is exactly same as Baues and Minian for associative algebra case. We introduce crossed $n$-fold extensions of a Lie algebra $\mathfrak{g}$ by a $\mathfrak{g}$-module $M$. The equivalence classes of such extensions are classified by $H^{n+1}_{CE} (\mathfrak{g}, M).$ We also find a similar result for Leibniz algebras.

 All vector spaces are over a field $\mathbb{K}$.

\section{Chevalley-Eilenberg cohomology}

Let $(\mathfrak{g},[~,~])$ a Lie algebra. A module over $\mathfrak{g}$ consists of a vector space $M$ together with a $\mathbb{K}$-bilinear map $[~,~]: \mathfrak{g} \times M \rightarrow M$ satisfying
\begin{align*}
[[x,y],m] = [x,[y,m]] - [y,[x,m]],
\end{align*}
for $x, y \in \mathfrak{g}$ and $m \in M$. It is clear that $\mathfrak{g}$ is a  module over $\mathfrak{g}$ with respect to the Lie bracket.

Given a Lie algebra $(\mathfrak{g},[~,~])$ and a module $M$, the corresponding Chevalley-Eilenberg cochain groups $\{ C^n_{CE} (\mathfrak{g}, M) \}_{n \geq 0}$ are given by $ C^0_{CE} (\mathfrak{g}, M) = M$ and $ C^n_{CE} (\mathfrak{g}, M) = Hom_{\mathbb{K}} (\wedge^n \mathfrak{g}, M)$, for $n \geq 1$. The coboundary map $\delta :  C^n_{CE} (\mathfrak{g}, M) \rightarrow  C^{n+1}_{CE} (\mathfrak{g}, M)$ is given by
\begin{align*}
(\delta m)(x) =~& [x, m],~~~ \text{ for } m \in M \text{ and } x \in \mathfrak{g},\\
(\delta f) (x_1, \ldots, x_{n+1}) =~& \sum_{i=1}^{n+1} (-1)^{i+1} [x_i, f (x_1, \ldots, \widehat{x_i}, \ldots, x_{n+1}) \\
~&+ \sum_{1 \leq i <  j \leq n+1} (-1)^{i+j} f ([x_i, x_j], x_1, \ldots, \widehat{x_i}, \ldots, \widehat{x_j}, \ldots, x_{n+1}),
\end{align*}
for $f \in C^n_{CE} (\mathfrak{g}, M)$ and $x_1, \ldots, x_{n+1} \in \mathfrak{g}$. The corresponding cohomology groups are denoted by $H^\bullet_{CE} (\mathfrak{g}, M)$ and called the Chevalley-Eilenberg cohomology of $\mathfrak{g}$ with coefficients in the module $M$.

It is easy to see that $H^0_{CE}(\mathfrak{g}, M)$ is the submodule of invariants of $M$:
\begin{align*}
H^0_{CE} (\mathfrak{g}, M) = \{ m \in M |~ [x,m] = 0, \forall x \in \mathfrak{g} \}.
\end{align*}
The first cohomology group $H^1_{CE} (\mathfrak{g}, M)$ can be seen as the space of outer derivations
\begin{align*}
H^1_{CE} (\mathfrak{g}, M) = \text{OutDer} (\mathfrak{g}, M) := \frac{\text{Der} (\mathfrak{g}, M)}{\text{InnDer} (\mathfrak{g}, M)},
\end{align*}
where a derivation is a map $f : \mathfrak{g} \rightarrow M$ satisfying $f [x,y] = [x, fy] - [y, fx]$, for all $x, y \in \mathfrak{g}$ and is called inner if $f(x) = [x, m]$, for some $m \in M$.

The second cohomology group $H^2_{CE} (\mathfrak{g}, M)$ can be described as the space of equivalence classes of Lie algebra extensions
of $\mathfrak{g}$ by the module $M$.

\medskip

\noindent \textbf{Pushout of $\mathfrak{g}$-modules.}
Pushout in a category is an important tool to study some nice properties of the category.
\begin{defn}
Let $\mathcal{C}$ be a category. Given two morphisms $f : A \rightarrow B$ and $g : A \rightarrow C$, a pushout (or fibered sum) is a triple $(D, i, j)$ where $D \in \text{Ob}(\mathcal{C})$ and $i : B \rightarrow D$ and $j : C \rightarrow D$ are morphisms in $\mathcal{C}$ such that $jg = if$ that satisfies the following universal property: for any triple $(D', i', j')$ with $j' g = i' f$, there is a unique morphism $\theta : D \rightarrow D'$ making the following diagram commute
\[
\xymatrix{
A \ar[r]^{g} \ar[d]_{f} & C & & A \ar[r]^{g} \ar[d]_{f} & C \ar[d]^{j} \ar@/^/[ddr]^{j'} &\\
B & & & B \ar[r]_{i} \ar@/_/[drr]_{i'} & D \ar@{.>}[rd]^{\theta} & \\
& & & & & D'.
}
\]

\end{defn}

Let $\mathfrak{g}$ be a Lie algebra. Consider the category $ \mathfrak{g}\textbf{-mod}$ of $\mathfrak{g}$-modules and $\mathfrak{g}$-module morphisms. Then the pushout of two maps $f : A \rightarrow B$ and $g : A \rightarrow C$ in $ \mathfrak{g}\textbf{-mod}$ exists.

Let $S = \{ (f(a), - g(a) ) |~ a \in A \}$. Then it is easy to see that $S$ is a $\mathfrak{g}$-submodule of $B \oplus C$. Take $D = \frac{B \oplus C}{S}$ and define $i : B \rightarrow D$ by $i (b) = (b,0)+ S$ and $j : C \rightarrow D$ by $j(c) = (0, c) + S$. It is easy to see that $jg = if$. Moreover, if $(D', i', j')$ is another triple with  $j'g = i' f$, we define $\theta : D \rightarrow D'$ by $\theta ((b,c)+ S) = i'(b) + j'(c)$. It is also easy to check that $\theta$ is unique. Hence the claim.

\section{Crossed modules of Lie algebras}
In this section we give an interpretation of $H^3_{CE} (\mathfrak{g}, M)$ in terms of crossed module over Lie algebras. However our notion of crossed module is different from the traditional one (see for example \cite{kassel-loday, wagemann}). Our definition is motivated from the one given by Baues and Minian for associative algebras \cite{baues-minian}.

\begin{defn}
A crossed module over a Lie algebra is a triple $(V, L, \partial)$ in which $L$ is a Lie algebra, $V$ is a $L$-module and $\partial : V \rightarrow L$ is a map of $L$-modules such that
\begin{align*}
[\partial v, w] = - [ \partial w, v ], ~~ \text{ for } v, w \in V.
\end{align*}
\end{defn}

\begin{defn}
Let $(V, L, \partial)$ and $(V', L', \partial')$ be two crossed modules. A map between them consists of a linear map $\alpha : V \rightarrow V'$ and a Lie algebra map $\beta : L \rightarrow L'$ such that the following diagram commute
\[
\xymatrix{
V \ar[r]^{\partial} \ar[d]_{\alpha} & L \ar[d]^{\beta} \\
V' \ar[r]_{\partial'} & L'
}
\]
and satisfying $\alpha [x,v] = [\beta(x), \alpha (v)],$ for $x \in L,~ v \in V.$
\end{defn}

Given a crossed module $(V , L, \partial)$, we consider $\mathfrak{g} = \text{coker}(\partial)$ and $M = \text{ker} (\partial)$. The Lie algebra structure on $L$ induces a Lie algebra structure on $\mathfrak{g}$ by $[\pi(x), \pi(y)] = \pi [x,y]$, where $\pi : L \rightarrow \mathfrak{g}$ is the projection map. Moreover, the action of $L$ on $V$ induces an action of $\mathfrak{g}$ on $M$ via $[\pi (x), m] = [x,m]$, for $m \in M$. Hence a crossed module yields an exact sequence
\begin{align*}
0 \rightarrow M \xrightarrow{i} V \xrightarrow{\partial} L \xrightarrow{\pi} \mathfrak{g} \rightarrow 0.
\end{align*}
We call $(V , L, \partial )$ a crossed module over the Lie algebra $\mathfrak{g}$ with kernel a $\mathfrak{g}$-module $M$. Two crossed modules 
$(V, L, \partial)$ and $(V', L', \partial')$ over $\mathfrak{g}$ with kernel $M$ are said to be equivalent if there is a morphism of crossed modules $(V, L, \partial) \rightarrow (V', L', \partial')$ which induces identity maps on $\mathfrak{g}$ and $M$. Let $\text{Cross}(\mathfrak{g}, M)$ be the equivalence classes of such crossed modules.
In the next theorem we see that equivalence classes of crossed modules are in one-to-one correspondence with the third Chevalley-Eilenberg cohomology.

\begin{thm}\label{cross-3}
There is a bijection
\begin{align*}
\psi : \text{Cross}(\mathfrak{g}, M) \rightarrow H^3_{CE} (\mathfrak{g}, M).
\end{align*}
\end{thm}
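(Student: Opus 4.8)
The plan is to construct the bijection $\psi$ explicitly by the classical "characteristic class" recipe, then build an inverse. Given a crossed module $(V,L,\partial)$ over $\mathfrak{g}$ with kernel $M$, we have the exact sequence $0 \to M \xrightarrow{i} V \xrightarrow{\partial} L \xrightarrow{\pi} \mathfrak{g} \to 0$. First I would choose a linear section $s : \mathfrak{g} \to L$ of $\pi$ (so $\pi s = \operatorname{id}_\mathfrak{g}$) and a linear section $t : \partial(V) \to V$ of $\partial$ onto its image. The failure of $s$ to be a Lie algebra map lies in $\ker \pi = \partial(V)$, so $s[x,y] - [s(x),s(y)] = \partial(\rho(x,y))$ for a well-defined bilinear $\rho : \wedge^2 \mathfrak{g} \to V$ (set $\rho(x,y) = t(s[x,y]-[s(x),s(y)])$, or any such lift). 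Then I would define a trilinear map $\theta : \wedge^3 \mathfrak{g} \to M$ by measuring the failure of the Jacobi-type identity for $\rho$: applying the Jacobiator of $L$ to $s(x), s(y), s(z)$ and using the crossed-module axioms to rewrite everything in terms of $\rho$ and the $L$-action, one finds that the resulting element of $V$ lies in $\ker\partial = M$. The standard computation shows $\delta\theta = 0$, so $\theta$ defines a class in $H^3_{CE}(\mathfrak{g},M)$; and changing the choices of $s$ and the lift $\rho$ changes $\theta$ by a coboundary, so the class is independent of all choices. One then checks that equivalent crossed modules give the same class, yielding a well-defined map $\psi : \operatorname{Cross}(\mathfrak{g},M) \to H^3_{CE}(\mathfrak{g},M)$.

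For surjectivity, given a cocycle $\theta \in Z^3_{CE}(\mathfrak{g},M)$ I would build a crossed module realizing it. Take $L = \mathfrak{g} \oplus (\wedge^2\mathfrak{g} \text{ piece})$ — more precisely, one can model $L$ on a free-ish construction or, following Baues--Minian, take $V = M \oplus C$ and $L = \mathfrak{g} \oplus C$ for a suitable space $C$ (e.g. $C = \wedge^2\mathfrak{g}$ or a quotient thereof) with bracket on $L$ twisted by $\rho$ and the $L$-action on $V$ twisted so that the crossed-module axiom $[\partial v, w] = -[\partial w, v]$ holds; the cocycle condition $\delta\theta = 0$ is exactly what makes the Jacobi identity on $L$ hold and what makes $\partial$ an $L$-module map. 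Setting $\partial : V \to L$ to be the obvious map with cokernel $\mathfrak{g}$ and kernel $M$, one verifies $\psi$ of this crossed module is $[\theta]$. For injectivity, I would show that if two crossed modules have cohomologous characteristic classes, one can modify the sections so the classes agree on the nose, and then use these compatible sections to build an explicit equivalence of crossed modules (possibly passing through a common "larger" crossed module, or directly writing down $\alpha$ and $\beta$ adjusted by the primitive of the coboundary).

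The main obstacle I expect is verifying that the construction genuinely produces a \emph{crossed module in the sense of this paper} — in particular the non-standard axiom $[\partial v, w] = -[\partial w, v]$ — rather than a classical crossed module, and keeping the bookkeeping straight between the symmetric/antisymmetric conventions this forces on the action. Concretely, the delicate points are: (i) checking that $\theta$ as defined really lands in $\ker\partial = M$, which uses the crossed-module axioms in an essential way; (ii) the cocycle identity $\delta\theta = 0$, whose proof reorganizes the (vanishing) Jacobiator of $L$ evaluated on four elements $s(x_1),\dots,s(x_4)$; and (iii) in the surjectivity step, defining the $L$-action on $V$ so that both the module axiom and the antisymmetry axiom $[\partial v,w]=-[\partial w,v]$ hold simultaneously — this is where the precise form of the Chevalley--Eilenberg differential (with its signs and the $\wedge$-structure) must match the algebraic constraints, and it is the step most likely to require care. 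Everything else is the usual diagram-chasing and choice-independence routine familiar from the group and associative cases.
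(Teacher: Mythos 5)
Your construction of the class is exactly the paper's: choose a linear section $s$ of $\pi$ and a lift $q$ of the defect $[s(x),s(y)]-s[x,y]$ into $V$ (your $t$, $\rho$ are the paper's $q$, $g$), antisymmetrize the resulting Jacobiator, and check it lands in $\ker\partial=M$; your outline of independence of choices and invariance under equivalences of crossed modules also matches the paper's computations. The divergence is in surjectivity. The paper does \emph{not} build a crossed module directly from a cocycle. It embeds $M$ into an injective $\mathfrak{g}$-module $I$, uses injectivity to identify $H^3_{CE}(\mathfrak{g},M)\cong H^2_{CE}(\mathfrak{g},M'')$ for $M''=I/M$ via the connecting homomorphism, realizes the resulting degree-two class by an abelian extension $0\to M''\to\mathfrak{e}\to\mathfrak{g}\to 0$, and takes the Yoneda splice $0\to M\to I\to\mathfrak{e}\to\mathfrak{g}\to 0$ as the desired crossed module (citing Wagemann for the compatibility of this splice with the connecting map). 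This dimension-shifting route buys a short proof at the price of explicitness.

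Your direct model for surjectivity has a genuine gap as written. With $L=\mathfrak{g}\oplus C$, $C=\wedge^2\mathfrak{g}$, and bracket $[(x,c),(y,c')]=([x,y],\rho(x,y))$, the Jacobi identity fails in the second coordinate: the cyclic sum produces $\rho([x,y],z)+\rho([y,z],x)+\rho([z,x],y)$, which for $\rho(x,y)=x\wedge y$ is $[x,y]\wedge z+[y,z]\wedge x+[z,x]\wedge y$ and is not zero in $\wedge^2\mathfrak{g}$ in general; the cocycle condition $\delta\theta=0$ constrains a map into $M$, not into $C$, so it cannot cancel these terms. Passing to ``a quotient thereof'' is where all the work hides: after quotienting, one must still arrange $\ker\partial=M$, the $L$-action on $V$, and the axiom $[\partial v,w]=-[\partial w,v]$ simultaneously, and it is not clear this can be done without a presentation-based model (a free Lie algebra $F\twoheadrightarrow\mathfrak{g}$ with relation module, as in the group case) or the paper's injective-module detour. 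One further remark: your injectivity sketch, vague as it is, addresses a point the paper's printed proof does not treat at all --- the argument in the paper stops after well-definedness and surjectivity --- so if you carry out the ``adjust sections and build an explicit equivalence'' step you would be supplying something the paper omits rather than duplicating it.
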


\begin{proof}
Let
\begin{align*}
\mathcal{E} : 0 \rightarrow M \xrightarrow{i} V \xrightarrow{\partial} L \xrightarrow{\pi} \mathfrak{g} \rightarrow 0
\end{align*}
be a crossed module. Choose $\mathbb{K}$-linear sections $s : \mathfrak{g} \rightarrow L$ with $\pi s = \text{id}$ and $q : \text{Im}(\partial) \rightarrow V$ with $\partial q = \text{id}$. For any $x , y \in \mathfrak{g},$ we have
\begin{align*}
\pi ([s(x), s(y)] - s[x,y]) = 0.
\end{align*}
This shows that $[s(x), s(y)] - s[x,y]$ is in ker $(\pi) = \text{Im} (\partial)$. Take $g (x,y) = q ([s(x), s(y)] - s[x,y]) \in V$. Define a map $\theta_{\mathcal{E}, s, q} : \mathfrak{g}^{\otimes 3} \rightarrow M$ by
$$\theta_{\mathcal{E}, s, q} (x,y,z) = [s(x), g(y,z)] - [s(y), g(x,z)] + [s(z), g (x,y)] 
 - g ([x,y], z) + g ([x,z], y) - g ([y,z],x).$$
Since $\partial$ is a map of $L$-modules, it follows that $\partial (\theta_{\mathcal{E}, s, q} (x,y,z)) = 0.$ Therefore, $\theta_{\mathcal{E}, s, q} (x,y,z) \in \text{ker} (\partial) = M$. The map $\theta_{\mathcal{E}, s, q} : \mathfrak{g}^{\otimes 3} \rightarrow M$ is skew-symmetric in $x, y, z$. Hence $\theta_{\mathcal{E}, s, q} : \wedge^3 \mathfrak{g} \rightarrow M$. The map $\theta_{\mathcal{E}, s, q}$ defines a $3$-cocycle in the Chevalley-Eilenberg cohomology of $\mathfrak{g}$ with coefficients in $M$.

We first show that the class of $\theta_{\mathcal{E}, s, q}$ in $H^3_{CE}(\mathfrak{g}, M)$ does not depend on the section $s$. Suppose $\overline{s} : \mathfrak{g} \rightarrow L$ is another section of $\pi$ and let $\theta_{\mathcal{E}, \overline{s}, q}$ be the corresponding $3$-cocycle defined by using $\overline{s}$ instead of $s$. Then there exists a linear map $h : \mathfrak{g} \rightarrow V$ with $s - \overline{s} = \partial h$. Observe that
\begin{align*}
[s(x), g(y,z)] - [\overline{s}(x) , \overline{g} (y,z)] 
=~& [s(x) - \overline{s}(x), g (y,z)] + [ \overline{s}(x), (g - \overline{g})(y,z)] \\
=~& [\partial h (x) , q ([s(y), s(z)] - s[y,z]) ] + [ \overline{s}(x), (g - \overline{g})(y,z)] \\
=~& - [ ([s(y), s(z)] - s[y,z]), h(x) ] + [ \overline{s}(x), (g - \overline{g})(y,z)].
\end{align*}
Therefore,
\begin{align}\label{e-e'}
( \theta_{\mathcal{E}, s, q} - \theta_{\mathcal{E}, \overline{s}, q}) (x,y,z) = ~&
- [ ([s(y), s(z)] - s[y,z]), h(x) ] + [ ([s(x), s(z)] - s[x,z]), h(y) ] \\
~&- [ ([s(x), s(y)] - s[x,y]), h(z) ]  + [ \overline{s}(x), (g - \overline{g})(y,z)]  \nonumber \\
~& - [ \overline{s}(y), (g - \overline{g})(x,z)] + [ \overline{s}(z), (g - \overline{g})(x,y)] \nonumber \\
~~&- (g - \overline{g}) ([x,y],z) + (g - \overline{g}) ([x,z], y) - (g - \overline{g}) ([y,z], x). \nonumber
\end{align}

Define a map $b : \wedge^2 \mathfrak{g} \rightarrow V$ by
\begin{align*}
b (x,y) = [s(x), h(y)] - h ([x,y]) - [s(y), h(x)] - [ \partial h(x), h (y)].
\end{align*}
Then a easy calculation shows that  $\partial b = \partial (g - \overline{g})$. Hence $(g - \overline{g} - b) : \wedge^2 \mathfrak{g} \rightarrow M$. It follows from (\ref{e-e'}) that
\begin{align*}
( \theta_{\mathcal{E}, s, q} - \theta_{\mathcal{E}, \overline{s}, q}) (x,y,z) =~& - [ ([s(y), s(z)] - s[y,z]), h(x) ] + [ ([s(x), s(z)] - s[x,z]), h(y) ] \\
~& - [ ([s(x), s(y)] - s[x,y]), h(z) ] + (\delta (g - \overline{g} - b)) (x,y,z) \\
~&+ [ \overline{s}(x), b (y, z)] - [ \overline{s}(y), b (x,z)] + [\overline{s}(z) , b (x,y)] \\
~&- b ([x,y],z) + b  ([x,z], y) - b ([y,z], x).
\end{align*}
In the right hand side of the above equation, we substitute the definition of $b$ in last six terms. After many cancellations on the right hand side (one has to use the fact that $(V , L, \partial)$ is a crossed module for some cancellations), we are only left with the term $(\delta (g - \overline{g} - b)) (x,y,z)$. Hence the class of $\theta_{\mathcal{E}, s, q}$ does not depend on the section $s$.

\medskip

Next consider a map
\[
\xymatrix{
\mathcal{E}:= \quad  0 \ar[r] & M \ar[r]^{i} \ar@{=}[d] & V \ar[r]^{\partial} \ar[d]_{\alpha} & L \ar[r]^{\pi} \ar[d]^{\beta} & \mathfrak{g} \ar[r] \ar@{=}[d] & 0 \\
\mathcal{E}' := \quad  0 \ar[r] & M \ar[r]_{i'} & V' \ar[r]_{\partial'} & L' \ar[r]_{\pi'} & \mathfrak{g} \ar[r] & 0
}
\]
of crossed modules. Let $s' : \mathfrak{g} \rightarrow L'$ and $q' : \text{Im} (\partial') \rightarrow V'$ be sections of $\pi'$ and $\partial'$, respectively. 
Note that $(\pi' \beta s)(x) = (\pi s) (x) = x$, for all $x \in \mathfrak{g}$. Therefore, $\beta s : \mathfrak{g} \rightarrow L'$ is another section of $\pi'$. Thus, we have
\begin{align*}
(\theta_{\mathcal{E}, s, q} &- \theta_{\mathcal{E}', \beta s, q'} ) (x,y,z) \\
= ~&[s(x), g(y,z)] - [s(y), g(x,z)] + [s(z), g (x,y)]  - g ([x,y], z) + g ([x,z], y) - g ([y,z],x) \nonumber \\
~-& [\beta s(x), g'(y,z)] + [\beta s(y), g'(x,z)] - [\beta s(z), g' (x,y)]  + g' ([x,y], z) - g' ([x,z], y) - g' ([y,z],x), \nonumber
\end{align*}
where $g'(x,y) = q' ([\beta s (x), \beta s (y)] - \beta s [x,y])$. Here we have used the same notation $[~,~]$ to denote the action of $L$ on $V$ and the action on $L'$ on $V'$. Hence, we have
\begin{align*}
(\theta_{\mathcal{E}, s, q} &- \theta_{\mathcal{E}', \beta s, q'} ) (x,y,z) \\
=~& [\beta s (x), (\alpha q - q' \beta) ([s(y), s(z)] - s[y,z]) ] 
- [\beta s (y), (\alpha q - q' \beta) ([s(x), s(z)] - s[x,z]) ] \\
~&+ [\beta s (z), (\alpha q - q' \beta) ([s(x), s(y)] - s[x,y]) ] 
- (\alpha q - q' \beta) \big(  [  s[x,y], s(z)] - s[[x,y],z]  \big) \\
~&+ (\alpha q - q' \beta) \big(  [  s[x,z], s(y)] - s[[x,z],y]  \big)
- (\alpha q - q' \beta) \big(  [  s[y,z], s(x)] - s[[y,z],x]  \big).
\end{align*}
It follows from the above expression that $(\theta_{\mathcal{E}, s, q} - \theta_{\mathcal{E}', \beta s, q'} ) (x,y,z) = (\delta \phi)(x,y,z)$, for some $\phi : \wedge^2 \mathfrak{g} \rightarrow M$. Hence, $[\theta_{\mathcal{E}, s, q}] = [\theta_{\mathcal{E}', \beta s, q'}]$ in $H^3_{CE} (\mathfrak{g}, M)$. Moreover, from the first part, we have $[\theta_{\mathcal{E}', \beta s, q'}] = [\theta_{\mathcal{E}',  s', q'}]$. Hence the class $[\theta_{\mathcal{E}, s, q}]$ does not depend on the sections $s$ and $q$. We denote the corresponding class by $[\theta_\mathcal{E}]$. Therefore, the map
\begin{align*}
 \psi : \text{Cross} (\mathfrak{g}, M) \rightarrow H^3_{CE} (\mathfrak{g}, M), ~ \mathcal{E} \rightarrow [\theta_\mathcal{E}]
\end{align*}
is well-defined. The surjectivity of $\psi$ follows from the next observation.
\end{proof}

Let $\mathfrak{g}$ be a Lie algebra and $0 \rightarrow M \rightarrow M' \xrightarrow{\pi} M'' \rightarrow 0$ an exact sequence of $\mathfrak{g}$-modules. Given an abelian extension 
\begin{align}\label{abel-extn}
0 \rightarrow M'' \rightarrow \mathfrak{e} \rightarrow \mathfrak{g} \rightarrow 0
\end{align}
of $\mathfrak{g}$ by $M''$, we consider the Yoneda product
\begin{align}\label{yoneda}
 0 \rightarrow M \rightarrow M' \xrightarrow{\mu} \mathfrak{e} \rightarrow \mathfrak{g} \rightarrow 0.
\end{align}
Writing $\mathfrak{e} = M'' \oplus \mathfrak{g}$ as a vector space, we have $\mu (m') = (\pi (m'), 0)$. An $\mathfrak{e}$-action on $M'$ is induced from the $\mathfrak{g}$-action on $M'$, namely,
\begin{align*}
[ (m'' , x) , m' ] = [x, m' ], ~~~~ \text{ for } (m'' , x) \in \mathfrak{e} \text{ and } m' \in M'.
\end{align*}
It is easy to see that (\ref{yoneda}) defines a crossed module of $\mathfrak{g}$ by $M$. It has been shown in \cite{wagemann} that the corresponding third cohomology class in $H^3_{CE} (\mathfrak{g}, M)$ as constructed above is the image of the second cohomology class in $H^2_{CE} (\mathfrak{g}, M'')$ (defined by the abelian extension (\ref{abel-extn})) under the connecting homomorphism $\partial$ in the cohomology long exact sequence
\begin{align*}
\cdots \rightarrow H^2_{CE} (\mathfrak{g}, M) \rightarrow H^2_{CE} (\mathfrak{g}, M') \rightarrow H^2_{CE} (\mathfrak{g}, M'') \xrightarrow{\partial} H^3_{CE} (\mathfrak{g}, M) \rightarrow \cdots .\\
\end{align*}

\noindent {\em Surjectivity of the map $\psi$ (of Theorem \ref{cross-3}).} The surjectivity of $\psi$ can be shown as follows \cite{wagemann}. As the category $\mathfrak{g}\text{-mod}$ possesses enough injectives, we can choose an injective $\mathfrak{g}$-module $I$ and a monomorphism $i : M \rightarrow I$. Consider the short exact sequence of $\mathfrak{g}$-modules
\begin{align}\label{inj-short}
0 \rightarrow M \xrightarrow{i} I \rightarrow M'' \rightarrow 0
\end{align}
where $M''$ is the cokernel of the map $i$. Since $I$ is injective, the cohomology long exact sequence yields $H^2_{CE} (\mathfrak{g}, M'') \cong H^3_{CE} (\mathfrak{g}, M).$ This isomorphism is given by the connecting homomorphism in the long exact sequence of cohomology.  Thus, a cohomology class $[\gamma] \in H^3_{CE} (\mathfrak{g}, M)$ corresponds to a class $[\alpha] \in H^2_{CE} (\mathfrak{g}, M'')$. We now consider an abelian extension $0 \rightarrow M'' \rightarrow \mathfrak{e}  \rightarrow \mathfrak{g} \rightarrow 0$ corresponding to the cohomology class $[\alpha] \in H^2_{CE} (\mathfrak{g}, M'')$ . It follows from the above discussion that the Yoneda product of (\ref{inj-short}) and the above abelian extension  gives rise to a crossed module whose associated third cohomology class is given by $[\gamma]$.

\section{Crossed $n$-fold extensions of Lie algebras}\label{sec-4}
Using the notion of crossed module of previous section, we introduce crossed $n$-fold extensions of a Lie algebra $\mathfrak{g}$ by a module $M$. We show that there is an abelian group structure on $\text{Opext}^n(\mathfrak{g}, M)$ of equivalence classes of crossed $n$-fold extensions of $\mathfrak{g}$ by $M$, for $n \geq 2$. The construction is similar to the case of associative algebra \cite{baues-minian}. Finally, we show that such extensions represent cohomology classes in $H^{n+1}_{CE} (\mathfrak{g}, M).$

Let $\mathfrak{g}$ be a Lie algebra and $M$ be a $\mathfrak{g}$-module. Let $n \geq 2$.

\begin{defn}
 A crossed $n$-fold extension of $\mathfrak{g}$ by $M$ is an exact sequence
\begin{align*}
0 \rightarrow M \xrightarrow{f} M_{n-1} \xrightarrow{\partial_{n-1}} \cdots \cdot \xrightarrow{\partial_2} M_1  \xrightarrow{\partial_1} L \xrightarrow{\pi} \mathfrak{g} \rightarrow 0
\end{align*}
of $\mathbb{K}$-vector spaces with the following properties:
\begin{itemize}
\item $(M_1 , L , \partial_1)$ is a crossed module with coker$(\partial_1) = \mathfrak{g},$
\item for $1 < i \leq n-1$, $M_i$ is a $\mathfrak{g}$-module and $\partial_i,~ f$ are morphisms of $\mathfrak{g}$-modules.
\end{itemize}
\end{defn}

\begin{defn}
Let
\begin{align*}
\mathcal{E} := \big( 0 \rightarrow M \xrightarrow{f} M_{n-1} \xrightarrow{\partial_{n-1}} \cdots \cdot \xrightarrow{\partial_2} M_1  \xrightarrow{\partial_1} L \xrightarrow{\pi} \mathfrak{g} \rightarrow 0  \big)
\end{align*}
and
\begin{align*}
\mathcal{E}' := \big( 0 \rightarrow M' \xrightarrow{f'} M'_{n-1} \xrightarrow{\partial'_{n-1}} \cdots \cdot \xrightarrow{\partial'_2} M'_1  \xrightarrow{\partial'_1} L' \xrightarrow{\pi'} \mathfrak{g} \rightarrow 0  \big)
\end{align*}
be two crossed $n$-fold extensions of $\mathfrak{g}$ by $M$ and $M'$, respectively. A morphism between them consists of maps $\alpha : M \rightarrow M'$, $\beta : L \rightarrow L'$ and $\delta_i : M_i \rightarrow M_i'$ for $1 \leq i \leq n-1$ such that
\begin{itemize}
\item all squares commute (see figure (\ref{cross-n-mor})),
\item $\alpha,~ \delta_i$ ($2 \leq i \leq n-1$) are morphism of $\mathfrak{g}$-modules,
\item $(\delta_1, \beta) : (M_1 , L, \partial_1) \rightarrow (M'_1 , L', \partial_1')$ is a map of crossed modules inducing the identity map on $\mathfrak{g}.$
\end{itemize}
\begin{align}\label{cross-n-mor}
\xymatrix{
 \quad  0 \ar[r] & M \ar[r]^{f} \ar@{=}[d] & M_{n-1} \ar[r]^{\partial_{n-1}} \ar[d]_{\alpha} & M_{n-2} \ar[d]_{\delta_{n-2}} \ar[r] & \ar@{.}[r] & \ar[r] & M_1 \ar[r]^{\partial_1} \ar[d]_{\delta_1} &L \ar[r]^{\pi} \ar[d]^{\beta} & \mathfrak{g} \ar[r] \ar@{=}[d] & 0 \\
 \quad  0 \ar[r] & M \ar[r]_{f'} & M'_{n-1} \ar[r]_{\partial'_{n-1}} &M_{n-2} \ar[r] &  \ar@{.}[r] &  \ar[r] & M_1' \ar[r]_{\partial'} & L' \ar[r]_{\pi'} & \mathfrak{g} \ar[r] & 0
}
\end{align}
\end{defn}

Let $\text{Opext}^n (\mathfrak{g}, M)$ be the equivalence classes of crossed $n$-fold extensions of $\mathfrak{g}$ by $M$. Then it follows that $\text{Opext}^2(\mathfrak{g}, M) = \text{Cross} (\mathfrak{g}, M).$ In the next, we show that $\text{Opext}^n (\mathfrak{g}, M)$ has a natural abelian group structure. To do that, we start with the following proposition.

\begin{prop}\label{cross-induced-by-morphism}
Let $\mathcal{E} \in \text{Opext}^n (\mathfrak{g}, M)$ be an $n$-fold extension of $\mathfrak{g}$ by $M$ and $\alpha : M \rightarrow M'$ be an $\mathfrak{g}$-module map. Then there exists an $n$-fold extension $\alpha \mathcal{E} \in \text{Opext}^n (\mathfrak{g}, M')$ and a morphism of the form $(\alpha, \delta_{n-1}, \ldots, \delta_1, \beta)$ from $\mathcal{E}$ to $\alpha \mathcal{E}$. Moreover, $\alpha \mathcal{E} \in \text{Opext}^n (\mathfrak{g}, M')$ is the unique $n$-fold extension with this property.
\end{prop}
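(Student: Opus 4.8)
The plan is to build $\alpha\mathcal{E}$ by a pushout construction at the left-most non-trivial spot, exactly mimicking the way one forms the pushout of $\mathfrak{g}$-modules described above. Concretely, start from
\[
\mathcal{E} : \quad 0 \rightarrow M \xrightarrow{f} M_{n-1} \xrightarrow{\partial_{n-1}} M_{n-2} \xrightarrow{\partial_{n-2}} \cdots \xrightarrow{\partial_1} L \xrightarrow{\pi} \mathfrak{g} \rightarrow 0
\]
and form the pushout $\overline{M}_{n-1}$ of the pair $f : M \rightarrow M_{n-1}$ and $\alpha : M \rightarrow M'$ in the category $\mathfrak{g}\text{-mod}$; write $\delta_{n-1} : M_{n-1} \rightarrow \overline{M}_{n-1}$ and $f' : M' \rightarrow \overline{M}_{n-1}$ for the structure maps, so $f'\alpha = \delta_{n-1} f$. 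Since $f$ is injective and $\alpha$ is a $\mathfrak{g}$-module map, $f'$ is injective as well (its kernel is the image under $\alpha$ of the preimage of $0$ under $f$, which is $0$). Next, the composite $\partial_{n-1} : M_{n-1} \rightarrow M_{n-2}$ kills $\operatorname{Im}(f) = \ker(\partial_{n-1})$, hence factors through the pushout: by the universal property applied to the cone $(\partial_{n-1}, 0)$ on $M_{n-2}$, there is a unique $\mathfrak{g}$-module map $\partial'_{n-1} : \overline{M}_{n-1} \rightarrow M_{n-2}$ with $\partial'_{n-1}\delta_{n-1} = \partial_{n-1}$ and $\partial'_{n-1} f' = 0$. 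Set $M'_i = M_i$, $\partial'_i = \partial_i$ and $\delta_i = \operatorname{id}$ for $i \le n-2$, and $L' = L$, $\beta = \operatorname{id}$. Define
\[
\alpha\mathcal{E} : \quad 0 \rightarrow M' \xrightarrow{f'} \overline{M}_{n-1} \xrightarrow{\partial'_{n-1}} M_{n-2} \xrightarrow{\partial_{n-2}} \cdots \xrightarrow{\partial_1} L \xrightarrow{\pi} \mathfrak{g} \rightarrow 0.
\]

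I would then check exactness of $\alpha\mathcal{E}$ spot by spot. Exactness at $M'$ is injectivity of $f'$, just noted. Exactness at $M_{n-2}$, $\dots$, $L$, $\mathfrak{g}$ is immediate since those parts of the sequence are unchanged and $\operatorname{Im}(\partial'_{n-1}) = \operatorname{Im}(\partial_{n-1}) = \ker(\partial_{n-2})$. The only real point is exactness at $\overline{M}_{n-1}$, i.e. $\ker(\partial'_{n-1}) = \operatorname{Im}(f')$. The inclusion $\operatorname{Im}(f') \subseteq \ker(\partial'_{n-1})$ holds by $\partial'_{n-1}f' = 0$. For the reverse inclusion one uses the explicit model $\overline{M}_{n-1} = (M_{n-1} \oplus M')/S$ with $S = \{(f(a), -\alpha(a)) \mid a \in M\}$: if $(m,m') + S$ lies in $\ker(\partial'_{n-1})$ then $\partial_{n-1}(m) = 0$, so $m = f(a)$ for some $a \in M$, and then $(m, m') + S = (f(a), m') + S = (0, m' + \alpha(a)) + S = f'(m' + \alpha(a)) \in \operatorname{Im}(f')$. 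Since $(M_1, L, \partial_1)$ is unchanged, it remains a crossed module with cokernel $\mathfrak{g}$, and all the maps $\delta_i$ ($i \le n-2$), $\partial'_i$ are $\mathfrak{g}$-module maps (for $\partial'_{n-1}$ this was built into its construction); $(\operatorname{id}, \operatorname{id})$ is trivially a map of crossed modules inducing the identity on $\mathfrak{g}$. Hence $(\alpha, \delta_{n-1}, \operatorname{id}, \dots, \operatorname{id}) : \mathcal{E} \rightarrow \alpha\mathcal{E}$ is a morphism of crossed $n$-fold extensions.

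Finally, uniqueness. Suppose $\alpha\mathcal{E}'' \in \text{Opext}^n(\mathfrak{g}, M')$ together with a morphism $(\alpha, \epsilon_{n-1}, \dots, \epsilon_1, \gamma) : \mathcal{E} \rightarrow \alpha\mathcal{E}''$ is another such extension. The commuting square at the left says $\epsilon_{n-1} f = f'' \alpha$, so the cone $(f'', \epsilon_{n-1})$ on the pushout data yields, by the universal property of $\overline{M}_{n-1}$, a unique $\mathfrak{g}$-module map $\theta : \overline{M}_{n-1} \rightarrow M''_{n-1}$ with $\theta \delta_{n-1} = \epsilon_{n-1}$ and $\theta f' = f''$. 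One checks $\partial''_{n-1}\theta = \partial'_{n-1}$ by precomposing with the epimorphism $(\delta_{n-1}, f')$ onto $\overline{M}_{n-1}$ — more precisely both sides agree after composing with $\delta_{n-1}$ (using $\partial''_{n-1}\epsilon_{n-1} = \partial_{n-1} = \partial'_{n-1}\delta_{n-1}$ from commutativity, the latter equality being $\partial''_{n-1}\epsilon_{n-1} = \epsilon_{n-2}\partial_{n-1}$ with $\epsilon_{n-2}$ the identification of $M_{n-2}$) and after composing with $f'$ (both give $0$), hence agree. Now $\theta$ together with the $\epsilon_i$ ($i \le n-2$, which are forced to be isomorphisms since they fit into commuting squares between exact sequences agreeing on both ends after $M_{n-2}$) and $\gamma$ assembles into a morphism $\alpha\mathcal{E} \rightarrow \alpha\mathcal{E}''$ inducing the identity on $\mathfrak{g}$ and $M'$, exhibiting the two extensions as equivalent. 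The main obstacle — modest as it is — is keeping the bookkeeping of the universal property straight and verifying that the factored map $\partial'_{n-1}$ interacts correctly with exactness at $\overline{M}_{n-1}$; everything else is a routine diagram chase once the pushout is in place.
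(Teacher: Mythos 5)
Your proposal is correct and takes essentially the same route as the paper: both form $\overline{M}_{n-1}$ as the pushout of $f$ and $\alpha$ in $\mathfrak{g}\text{-mod}$, factor $\partial_{n-1}$ through it via the universal property, and obtain uniqueness by the universal property producing the comparison morphism $\alpha\mathcal{E}\rightarrow\mathcal{E}''$. You supply more detail than the paper on exactness at $\overline{M}_{n-1}$ (using the explicit quotient model), which is welcome; the only blemish is the parenthetical claim that the $\epsilon_i$ are ``forced to be isomorphisms,'' which is neither needed nor generally true --- the existence of a morphism inducing the identity on $M'$ and $\mathfrak{g}$ already gives equivalence of the classes.
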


\begin{proof}
The proof is based on pushout of $\mathfrak{g}$-modules. Let
$\mathcal{E} := (0 \rightarrow M \xrightarrow{f} M_{n-1} \xrightarrow{\partial_{n-1}} \cdots \xrightarrow{\partial_2} M_1 \xrightarrow{\partial_1} L \xrightarrow{\pi} \mathfrak{g} \rightarrow 0) \in \text{Opext}^n (\mathfrak{g}, M)$ be a crossed $n$-fold extension. We consider the $n$-fold extension $\alpha \mathcal{E} \in \text{Opext}^n (\mathfrak{g}, M)$ as 
\begin{align*}
\alpha \mathcal{E} := (   0 \rightarrow M' \rightarrow \overline{M_{n-1}} \rightarrow M_{n-2} \xrightarrow{\partial_{n-2}} \cdots \xrightarrow{\partial_2} M_1 \xrightarrow{\partial_1} L \xrightarrow{\pi} \mathfrak{g} \rightarrow 0 )
\end{align*}
where $M' \rightarrow \overline{M_{n-1}} \rightarrow M_{n-2}$ is obtained from the following pushout of $\mathfrak{g}$-modules
\[
\xymatrix{
M \ar[r] \ar[d]_{\alpha} & M_{n-1} \ar[d]^{i} \ar@/^/[ddr] &\\
M' \ar[r] \ar@/_/[drr]_0 & \overline{M_{n-1}} \ar@{.>}[rd]& \\
 & & M_{n-2}.
}
\]
Moreover $(\alpha, i, \text{id}, \ldots, \text{id})$ defines a morphism from $\mathcal{E}$ to $\alpha \mathcal{E}$.

Finally, let $\mathcal{E}' \in \text{Opext}^n (\mathfrak{g}, M')$ be an $n$-fold extension and there is a morphism $(\alpha, \delta_{n-1}, \ldots, \delta_1, \beta) : \mathcal{E} \rightarrow \mathcal{E}'$. Then by properties of pushout, one obtains a map $(1, j, \delta_{n-2}, \ldots, \delta_1, \beta) : \alpha \mathcal{E} \rightarrow \mathcal{E}'$. This shows that the class of $\alpha \mathcal{E}$ and $\mathcal{E}'$ is same in $\text{Opext}^n (\mathfrak{g}, M)$.
\end{proof}

Thus, it follows from the above proposition that a $\mathfrak{g}$-module map $\alpha : M \rightarrow M'$ induces a well-defined map
\begin{align*}
\alpha_* : \text{Opext}^n (\mathfrak{g}, M) \rightarrow \text{Opext}^n (\mathfrak{g}, M'),~ \mathcal{E} \mapsto \alpha \mathcal{E}.
\end{align*}

\begin{defn}
let $\mathcal{E} = \big( 0 \rightarrow M \xrightarrow{f} M_{n-1} \xrightarrow{\partial_{n-1}} \cdots \cdot \xrightarrow{\partial_2} M_1  \xrightarrow{\partial_1} L \xrightarrow{\pi} \mathfrak{g} \rightarrow 0  \big) \in \text{Opext}^n (\mathfrak{g}, M)$ and $\mathcal{E}' = \big( 0 \rightarrow M' \xrightarrow{f'} M'_{n-1} \xrightarrow{\partial'_{n-1}} \cdots \cdot \xrightarrow{\partial'_2} M'_1  \xrightarrow{\partial'_1} L' \xrightarrow{\pi'} \mathfrak{g} \rightarrow 0  \big) \in \text{Opext}^n (\mathfrak{g}, M)$ be two crossed $n$-fold extensions of $\mathfrak{g}$ by $M$ and $M'$, respectively. The sum of $\mathcal{E}$ and $\mathcal{E}'$ over $\mathfrak{g}$ is denoted by $\mathcal{E} \oplus_\mathfrak{g} \mathcal{E}'$ and is given by the following $n$-fold extension
\begin{align*}
0 \rightarrow M \oplus M' \rightarrow M_{n-1} \oplus M'_{n-1} \rightarrow \cdots \cdot  \rightarrow M_1 \oplus M_1' \xrightarrow{ (\partial_1, \partial'_1)} L \times_\mathfrak{g} L' \xrightarrow{q} \mathfrak{g} \rightarrow 0.
\end{align*}
Here the Lie algebra structure on $L \times_\mathfrak{g} L'$ is given by
\begin{align*}
[(x, x'), (y, y')] = ([x,y], [x',y']), ~~~ \text{ for } (x, x'), (y, y') \in L \times_\mathfrak{g} L'. 
\end{align*}
The projection $q : L \times_{\mathfrak{g}} L' \rightarrow \mathfrak{g}$ is the obvious one. The action of the Lie algebra $L \times_{\mathfrak{g}} L'$ on $M_1 \oplus M_1'$ is defined coordinatewise. It is easy to check that $(M_1 \oplus M_1' ,  L \times_\mathfrak{g} L',  (\partial_1, \partial'_1))$ defines a crossed module.
\end{defn}

\begin{defn} (Baer sum) Let $\mathcal{E}, \mathcal{E}' \in \text{Opext}^n (\mathfrak{g}, M)$ with $n \geq 3$. Then the Baer sum $\mathcal{E} + \mathcal{E}' \in \text{Opext}^n (\mathfrak{g}, M)$ is defined by
\begin{align*}
\mathcal{E} + \mathcal{E}' = \nabla_M (\mathcal{E} \oplus_\mathfrak{g} \mathcal{E}')
\end{align*}
where $\nabla_M : M \oplus M \rightarrow M, ~ (m_1 , m_2) \mapsto m_1 + m_2$ is the codiagonal map.
\end{defn}

\begin{defn}\label{zero-element} (Zero extension) Let $n \geq 3$. Then
\begin{align*}
0 \rightarrow M = M \rightarrow \underbrace{0 \rightarrow \cdots \cdot \rightarrow 0}_{n-2} \rightarrow \mathfrak{g} \rightarrow \mathfrak{g} \rightarrow 0
\end{align*}
is a crossed $n$-fold extension of $\mathfrak{g}$ by $M$. We define $0 \in \text{Opext}^n (\mathfrak{g}, M)$ to be the class of this extension.
\end{defn}

\begin{remark}\label{cross-rep-zero}
Let $\mathcal{E} := ( 0 \rightarrow M \xrightarrow{f} M_{n-1} \xrightarrow{\partial_{n-1}} \cdots \xrightarrow{\partial_2} M_1 \xrightarrow{\partial_1} L \xrightarrow{\pi} \mathfrak{g} \rightarrow 0 ) \in \text{Opext}^n (\mathfrak{g}, M)$ be a crossed $n$-fold extension, for $n \geq 3$. Suppose there is a map $g : M_{n-1} \rightarrow M$ of $\mathfrak{g}$-modules satisfying $gf = \text{id}_M$. Then there is a morphism
\begin{align*}
\xymatrix{
 \quad  0 \ar[r] & M \ar[r]^{f} \ar@{=}[d] & M_{n-1} \ar[r]^{\partial_{n-1}} \ar[d]_{g} & M_{n-2} \ar[d]_{0} \ar[r]^{\partial_{n-2}} & \ar@{.}[r] & \ar[r]^{\partial_2} & M_1 \ar[r]^{\partial_1} \ar[d]_{0} &L \ar[r]^{\pi} \ar[d]^{\pi} & \mathfrak{g} \ar[r] \ar@{=}[d] & 0 \\
 \quad  0 \ar[r] & M \ar@{=}[r] & M \ar[r]_{0} & 0 \ar[r] &  \ar@{.}[r] &  \ar[r] & 0 \ar[r]_{0} & \mathfrak{g} \ar@{=}[r] & \mathfrak{g} \ar[r] & 0
}
\end{align*}
of crossed $n$-fold extensions. This shows that the class of $\mathcal{E}$ defines $0 \in \text{Opext}^n (\mathfrak{g}, M)$.
\end{remark}

\begin{remark}
Let $\mathcal{E} := ( 0 \rightarrow M \xrightarrow{f} M_{n-1} \xrightarrow{\partial_{n-1}} \cdots \xrightarrow{\partial_2} M_1 \xrightarrow{\partial_1} L \xrightarrow{\pi} \mathfrak{g} \rightarrow 0 ) \in \text{Opext}^n (\mathfrak{g}, M)$ be a crossed $n$-fold extension, for $n \geq 3$. It follows from Proposition \ref{cross-induced-by-morphism} that $f \mathcal{E} \in \text{Opext}^n (\mathfrak{g}, M_{n-1})$ and is given by the bottom row of the following diagram
\begin{align*}
\xymatrix{
 \quad  0 \ar[r] & M \ar[r]^{f} \ar[d]_{f} & M_{n-1} \ar[r]^{\partial_{n-1}} \ar[d]_{(id, \partial_{n-1})} & M_{n-2} \ar@{=}[d] \ar[r]^{\partial_{n-2}} & \ar@{.}[r] &  \ar[r]^{\partial_1}  &L \ar[r]^{\pi} \ar@{=}[d] & \mathfrak{g} \ar[r] \ar@{=}[d] & 0 \\
 \quad  0 \ar[r] & M_{n-1} \ar[r]_{(id, 0)} & M_{n-1} \oplus M_{n-2} \ar[r]_{pr_2} & M_{n-2} \ar[r] &  \ar@{.}[r] & \ar[r]_{\partial_1} & L \ar[r]_{\pi} & \mathfrak{g} \ar[r] & 0.
}
\end{align*}
Observe that, in $f \mathcal{E}$, there is a map $g = pr_1 : M_{n-1} \oplus M_{n-2} \rightarrow M_{n-1}$ which satisfies $g \circ (id, 0) = id_{M_{n-1}}$. Hence, it follows from Remark \ref{cross-rep-zero} that $f \mathcal{E} = 0$ in $\text{Opext}^n (\mathfrak{g}, M_{n-1}).$
\end{remark}

\begin{defn}\label{inverse-element} (Inverse of an extension) Let
\begin{align*}
\mathcal{E} := \big( 0 \rightarrow M \xrightarrow{f} M_{n-1} \xrightarrow{\partial_{n-1}} \cdots \cdot \xrightarrow{\partial_2} M_1  \xrightarrow{\partial_1} L \xrightarrow{\pi} \mathfrak{g} \rightarrow 0  \big) \in \text{Opext}^n (\mathfrak{g}, M)
\end{align*}
be an extension. Then
\begin{align*}
- \mathcal{E} := \big( 0 \rightarrow M \xrightarrow{- f} M_{n-1} \xrightarrow{\partial_{n-1}} \cdots \cdot \xrightarrow{\partial_2} M_1  \xrightarrow{\partial_1} L \xrightarrow{\pi} \mathfrak{g} \rightarrow 0  \big) \in \text{Opext}^n (\mathfrak{g}, M)
\end{align*}
defines a new crossed $n$-extension of $\mathfrak{g}$ by $M$.
\end{defn}

The proof of the following theorem is straightforward.
\begin{thm}\label{thm-abelian-grp}
Let $n \geq 3$. Then the set $\text{Opext}^n (\mathfrak{g}, M) $ eqipped with the Baer sum is an abelian group. The zero element and inverse elements of this group are given by Definitions \ref{zero-element} and \ref{zero-element}, respectively.

Moreover, if $\alpha : M \rightarrow M'$ is a morphism of $\mathfrak{g}$-modules, the map $\alpha_* : \text{Opext}^n (\mathfrak{g}, M) \rightarrow \text{Opext}^n (\mathfrak{g}, M')$ is a morphism of groups.
\end{thm}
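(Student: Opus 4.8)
The plan is to deduce all the group axioms formally from Proposition~\ref{cross-induced-by-morphism} and Remark~\ref{cross-rep-zero}, after first recording a few bookkeeping facts that let one replace the operations $\oplus_\mathfrak{g}$ and $\nabla_M$ by pushforwards $\alpha_*$ along suitable linear maps between finite direct sums of $M$. Concretely, I would first check: (a) $(\alpha\beta)_*=\alpha_*\beta_*$ and $(\text{id})_*=\text{id}$ on $\text{Opext}^n(\mathfrak{g},-)$, both immediate from the uniqueness clause of Proposition~\ref{cross-induced-by-morphism} by composing the universal morphisms; and (b) $\oplus_\mathfrak{g}$ is functorial in each argument, i.e. a pair of morphisms $\mathcal{E}_1\to\mathcal{E}_2$, $\mathcal{E}_1'\to\mathcal{E}_2'$ of crossed $n$-fold extensions induces a morphism $\mathcal{E}_1\oplus_\mathfrak{g}\mathcal{E}_1'\to\mathcal{E}_2\oplus_\mathfrak{g}\mathcal{E}_2'$, obtained by taking direct sums of all component maps together with the induced map $L_1\times_\mathfrak{g}L_1'\to L_2\times_\mathfrak{g}L_2'$; the $\mathfrak{g}$-linearity and crossed-module conditions hold coordinatewise, hence are preserved. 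In particular $\oplus_\mathfrak{g}$, and therefore the Baer sum $\mathcal{E}+\mathcal{E}'=\nabla_M(\mathcal{E}\oplus_\mathfrak{g}\mathcal{E}')$, descends to a well-defined binary operation on $\text{Opext}^n(\mathfrak{g},M)$.

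Next I would establish three ``reduction'' identities. The coordinatewise diagonals $M_i\to M_i\oplus M_i$, together with the diagonal Lie-algebra map $L\to L\times_\mathfrak{g}L$, assemble into a morphism $\mathcal{E}\to\mathcal{E}\oplus_\mathfrak{g}\mathcal{E}$ covering $\Delta_M\colon M\to M\oplus M$, so Proposition~\ref{cross-induced-by-morphism} gives $\mathcal{E}\oplus_\mathfrak{g}\mathcal{E}=(\Delta_M)_*\mathcal{E}$. Taking the identity on every term except $M$ and $-\text{id}_M$ on $M$ gives a morphism $\mathcal{E}\to-\mathcal{E}$ covering $-\text{id}_M$, so $-\mathcal{E}=(-\text{id}_M)_*\mathcal{E}$ with $-\mathcal{E}$ as in Definition~\ref{inverse-element}. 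Finally, for a $\mathfrak{g}$-module map $\alpha\colon M\to M'$, the functoriality of $\oplus_\mathfrak{g}$ supplies a morphism $\mathcal{E}\oplus_\mathfrak{g}\mathcal{E}''\to(\alpha_*\mathcal{E})\oplus_\mathfrak{g}\mathcal{E}''$ covering $\alpha\oplus\text{id}$, whence the distributivity identity $(\alpha\oplus\text{id})_*(\mathcal{E}\oplus_\mathfrak{g}\mathcal{E}'')=(\alpha_*\mathcal{E})\oplus_\mathfrak{g}\mathcal{E}''$ and, by symmetry, its analogue in the second variable.

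With these in hand each axiom is a short computation. Commutativity: the coordinate swap is an isomorphism $\mathcal{E}\oplus_\mathfrak{g}\mathcal{E}'\cong\mathcal{E}'\oplus_\mathfrak{g}\mathcal{E}$ covering the swap of $M\oplus M$, and $\nabla_M\circ(\text{swap})=\nabla_M$. Associativity: the re-bracketing isomorphism $(\mathcal{E}\oplus_\mathfrak{g}\mathcal{E}')\oplus_\mathfrak{g}\mathcal{E}''\cong\mathcal{E}\oplus_\mathfrak{g}(\mathcal{E}'\oplus_\mathfrak{g}\mathcal{E}'')$ covers the associativity isomorphism of $M^{\oplus 3}$, and using it together with the distributivity identities one rewrites both $(\mathcal{E}+\mathcal{E}')+\mathcal{E}''$ and $\mathcal{E}+(\mathcal{E}'+\mathcal{E}'')$ as $\phi_*\big((\mathcal{E}\oplus_\mathfrak{g}\mathcal{E}')\oplus_\mathfrak{g}\mathcal{E}''\big)$ for the single map $\phi\colon M^{\oplus 3}\to M$, $(m_1,m_2,m_3)\mapsto m_1+m_2+m_3$. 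Unit: inspecting the pushout computing $\nabla_M(\mathcal{E}\oplus_\mathfrak{g}0)$ with $0$ the zero extension of Definition~\ref{zero-element}, one finds that $L\times_\mathfrak{g}\mathfrak{g}$ is canonically $L$, the lower terms are unchanged, and $(m,a,m')\mapsto a+f(m)+f(m')$ is a $\mathfrak{g}$-module isomorphism $\big(M\oplus M_{n-1}\oplus M\big)/S\to M_{n-1}$ compatible with all the maps, giving an equivalence $\nabla_M(\mathcal{E}\oplus_\mathfrak{g}0)\cong\mathcal{E}$. Inverse: the reductions above give $\mathcal{E}+(-\mathcal{E})=\nabla_M\big((\text{id}\oplus(-\text{id}))_*(\Delta_M)_*\mathcal{E}\big)=\big(\nabla_M\circ(\text{id}\oplus(-\text{id}))\circ\Delta_M\big)_*\mathcal{E}=0_*\mathcal{E}$, since the displayed composite $M\to M$ is the zero map; and $0_*\mathcal{E}$, whose $(n-1)$-st term is the pushout of $M\xleftarrow{0}M\xrightarrow{f}M_{n-1}$ with $M$ included as the first summand, is the zero class by Remark~\ref{cross-rep-zero} (the retraction being $\mathrm{pr}_1$). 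The final assertion follows similarly: two applications of distributivity give $(\alpha_*\mathcal{E})\oplus_\mathfrak{g}(\alpha_*\mathcal{E}')=(\alpha\oplus\alpha)_*(\mathcal{E}\oplus_\mathfrak{g}\mathcal{E}')$, so $\alpha_*(\mathcal{E}+\mathcal{E}')=\big(\nabla_{M'}\circ(\alpha\oplus\alpha)\big)_*(\mathcal{E}\oplus_\mathfrak{g}\mathcal{E}')=\big(\alpha\circ\nabla_M\big)_*(\mathcal{E}\oplus_\mathfrak{g}\mathcal{E}')=\alpha_*\mathcal{E}+\alpha_*\mathcal{E}'$, while $\alpha_*$ carries a zero extension to a zero extension because pushing it out along $\alpha$ involves an identity leg.

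I expect the only slightly delicate points to be the functoriality of $\oplus_\mathfrak{g}$ — in particular checking that $(M_1\oplus M_1',\,L\times_\mathfrak{g}L',\,(\partial_1,\partial_1'))$ really is a crossed module and that the induced maps respect this structure — and the explicit pushout computations underlying the unit and inverse axioms; everything else is a purely formal consequence of Proposition~\ref{cross-induced-by-morphism}, the relation $(\alpha\beta)_*=\alpha_*\beta_*$, and Remark~\ref{cross-rep-zero}.
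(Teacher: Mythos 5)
Your proof is correct. The paper gives no argument for this theorem — it is declared ``straightforward'' with an implicit deferral to the associative-algebra case of Baues and Minian — and what you have written is precisely the standard formal proof one would extract from that reference: use the uniqueness clause of Proposition \ref{cross-induced-by-morphism} to reduce every construction to a pushforward $\alpha_*$ along a linear map between finite direct sums of $M$ (via $(\alpha\beta)_*=\alpha_*\beta_*$, functoriality of $\oplus_\mathfrak{g}$, and the key identity $\mathcal{E}\oplus_\mathfrak{g}\mathcal{E}=(\Delta_M)_*\mathcal{E}$), after which the only genuine computations are the unit law and the identification $0_*\mathcal{E}=0$ via Remark \ref{cross-rep-zero}, both of which you carry out correctly.
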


\begin{remark}
The set $\text{Opext}^2 (\mathfrak{g}, M)$ also inherits the structure of an abelian group. However, one can easily feel that the structure of this abelian group must be different than the one defined in Theorem \ref{thm-abelian-grp}.

Note that an element in $\text{Opext}^2 (\mathfrak{g}, M)$ is a equivalence class of crossed modules. We denote $0 \in \text{Opext}^2 (\mathfrak{g}, M)$ to be the class of the crossed module
$ 0 \rightarrow M = M \xrightarrow{0}  \mathfrak{g} = \mathfrak{g} \rightarrow 0$. The addition in $\text{Opext}^2 (\mathfrak{g}, M)$ is given as follows. Let $\mathcal{E} := (0 \rightarrow M \xrightarrow{i} V \xrightarrow{\partial} L \xrightarrow{\pi} \mathfrak{g} \rightarrow 0)$ and 
 $\mathcal{E}' := (0 \rightarrow M \xrightarrow{i'} V' \xrightarrow{\partial'} L' \xrightarrow{\pi'} \mathfrak{g} \rightarrow 0)$ be two crossed modules in $\text{Opext}^2 (\mathfrak{g}, M)$. Then the Baer sum $\mathcal{E} + \mathcal{E}'$ is the class of the extension
 \begin{align*}
 0 \rightarrow M \xrightarrow{j} V + V' \xrightarrow{\tilde{\partial}} L \times_\mathfrak{g} L' \xrightarrow{q} \mathfrak{g} \rightarrow 0.
 \end{align*}
 Here $V+ V'$ is the pushout of $\mathbb{K}$-vector spaces
 \[
\xymatrix{
M \oplus M \ar[r]^{i \oplus i'} \ar[d]_{\nabla_M} & V \oplus V' \ar[d]^{r} \ar@/^/[ddr]^{(\partial, \partial')} &\\
M \ar[r]_{j} \ar@/_/[drr]_0 & V + V' \ar@{.>}[rd]^{\tilde{\partial}} & \\
 & & L \times_\mathfrak{g} L'.
}
\]
The $(L \times_\mathfrak{g} L')$-module structure on $V + V'$ is induced from the module structure on $V \oplus V'$ via the quotient map $r : V \oplus V' \rightarrow V+ V'$. Namely, the action is given by $[(x, x'), r (v,v')] = r ([x,v], [x',v'])$, for $(x, x') \in L \times_\mathfrak{g} L'$ and $(v, v') \in V \oplus V'$. With this module structure on $V + V'$, it is easy to show that $(V + V' , L \times_{\mathfrak{g}} L', \tilde{\partial})$ is a crossed module.
\end{remark}

Next we associate a long exact sequence to any short exact sequence of $\mathfrak{g}$-modules. This will help us to prove our main theorem. Let
\begin{align*}
0 \rightarrow M \xrightarrow{\alpha} M'  \xrightarrow{\beta} M'' \rightarrow 0
\end{align*}
be a short exact sequence of $\mathfrak{g}$-modules. For any $n \geq 2$, we define a homomorphism
\begin{align*}
\delta : \text{Opext}^n (\mathfrak{g}, M'') \rightarrow \text{Opext}^{n+1} (\mathfrak{g}, M)
\end{align*}
as follows. Given an extension 
\begin{align*}
\mathcal{E} = \big(  0 \rightarrow M'' \xrightarrow{f} M_{n-1} \xrightarrow{\partial_{n-1}} \cdots \cdot \xrightarrow{\partial_2} M_1 \xrightarrow{\partial_1} L \xrightarrow{\pi} \mathfrak{g} \rightarrow 0 \big) \in \text{Opext}^n (\mathfrak{g}, M''),
\end{align*}
we define $\delta \mathcal{E} \in \text{Opext}^{n+1} (\mathfrak{g}, M)$ to be the class of the extension
\begin{align*}
0 \rightarrow M \xrightarrow{\alpha} M' \xrightarrow{f \beta} & M_{n-1} \xrightarrow{\partial_{n-1}} \cdots \cdot \xrightarrow{\partial_2} M_1 \xrightarrow{\partial_1} L \xrightarrow{\pi} \mathfrak{g} \rightarrow 0.
\end{align*}
Then $\delta$ is a well-defined homomorphism  for $n \geq 2$.

The proof of the following theorem is similar to the proof of Baues and Minian for associative algebra case \cite{baues-minian}, hence we omit the details.
\begin{thm}\label{short-long-seq}
A short exact sequence
\begin{align*}
0 \rightarrow M \xrightarrow{\alpha} M'  \xrightarrow{\beta} M'' \rightarrow 0
\end{align*}
of $\mathfrak{g}$-modules induces a long exact sequence of abelian groups ($n \geq 2$)
\begin{align*}
\text{Opext}^n (\mathfrak{g}, M) \xrightarrow{\alpha_*} \text{Opext}^n (\mathfrak{g}, M') \xrightarrow{\beta_*} \text{Opext}^n (\mathfrak{g}, M'') \xrightarrow{\delta} \text{Opext}^{n+1} (\mathfrak{g}, M) \rightarrow \cdots .
\end{align*}
\end{thm}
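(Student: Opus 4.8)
The plan is to verify exactness of the sequence
\[
\text{Opext}^n (\mathfrak{g}, M) \xrightarrow{\alpha_*} \text{Opext}^n (\mathfrak{g}, M') \xrightarrow{\beta_*} \text{Opext}^n (\mathfrak{g}, M'') \xrightarrow{\delta} \text{Opext}^{n+1} (\mathfrak{g}, M) \xrightarrow{\alpha_*} \cdots
\]
at each of the three types of position, mimicking the homological-algebra argument of Baues and Minian. First I would fix $n \geq 2$ and note that all the maps $\alpha_*, \beta_*, \delta$ have already been shown to be group homomorphisms (Theorem \ref{thm-abelian-grp} and the discussion preceding Theorem \ref{short-long-seq}), so it suffices to check equality of images and kernels as subsets; moreover, since everything is additive, to show a composite is zero it suffices to exhibit, for each extension, an explicit morphism of crossed $n$-fold extensions onto a representative of the zero class, using the criterion of Remark \ref{cross-rep-zero} (existence of a $\mathfrak{g}$-module retraction $g$ of the leftmost map $f$).

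For exactness at $\text{Opext}^n(\mathfrak{g}, M'')$: the composite $\delta \circ \beta_*$ sends an extension $\mathcal{E} \in \text{Opext}^n(\mathfrak{g}, M')$ first to $\beta_* \mathcal{E}$, obtained by a pushout along $\beta$, and then prepends $0 \to M \xrightarrow{\alpha} M' \to \overline{M_{n-1}} \to \cdots$. Using that $\beta \alpha = 0$ and the universal property of the pushout, one builds a $\mathfrak{g}$-module retraction of $\alpha$ into this $(n+1)$-fold extension, so $\delta\beta_* = 0$; conversely if $\delta \mathcal{F} = 0$ for $\mathcal{F} \in \text{Opext}^n(\mathfrak{g}, M'')$, a retraction of $\alpha$ in $\delta\mathcal{F}$ produces, via the pushout square defining $M'$, a lift of $\mathcal{F}$ to an extension by $M'$ mapping onto $\mathcal{F}$, giving $\mathcal{F} \in \text{im}\,\beta_*$. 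For exactness at $\text{Opext}^{n+1}(\mathfrak{g}, M)$: $\alpha_* \delta = 0$ because in $\alpha_*(\delta \mathcal{E})$ the two leftmost terms $M \xrightarrow{\alpha} M'$ get replaced by a pushout along $\alpha$, into which $\alpha$ composed with the pushout leg admits the obvious retraction (indeed $\alpha_*\delta\mathcal E$ has leftmost map factoring as a pushout of an injection, cf.\ the second Remark after Definition \ref{zero-element}); and if $\alpha_* \mathcal{G} = 0$ for $\mathcal{G} = (0 \to M \xrightarrow{f} M_n \to \cdots \to \mathfrak{g} \to 0)$, then a retraction of $\alpha f$ in $\alpha_*\mathcal{G}$ lets one push $\mathcal{G}$ down: one checks $M_n/\text{im}(f) \cong$ (a module fitting into a short exact sequence with $M''$), so that $\mathcal{G}$ is equivalent to $\delta$ of the induced $n$-fold extension of $\mathfrak{g}$ by $M''$. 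Finally, exactness at $\text{Opext}^n(\mathfrak{g}, M')$ for $n \geq 3$ is the most classical part: $\beta_* \alpha_* = (\beta\alpha)_* = 0_*$ and $0_*\mathcal{E}$ is seen to be the zero extension by Remark \ref{cross-rep-zero}; conversely if $\beta_*\mathcal{E}' = 0$ one uses the pushout square along $\beta$ together with the retraction witnessing triviality to factor the leftmost map of $\mathcal{E}'$ through $\alpha$, exhibiting $\mathcal{E}'$ in $\text{im}\,\alpha_*$. For $n = 2$ the leftmost positions involve crossed modules rather than plain modules and one argues directly with the crossed-module equivalence of Theorem \ref{cross-3}, or passes to the bijection $\psi$ and quotes the long exact cohomology sequence for Chevalley--Eilenberg cohomology.

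The main obstacle I expect is the bookkeeping in the two "converse" directions, where one is handed a morphism onto a zero-representative and must reconstruct a preimage under $\beta_*$ (resp.\ $\delta$). Concretely: given a $\mathfrak{g}$-module retraction $g$ of the leftmost map of $\delta\mathcal{F}$, one must (i) use $g$ together with the short exact sequence $0 \to M \xrightarrow{\alpha} M' \xrightarrow{\beta} M'' \to 0$ to define the middle-term module of the lifted extension as an appropriate fibre product or quotient, (ii) verify that the new differentials are still $\mathfrak{g}$-module maps and that the crossed-module axiom at the $L$-end is untouched (it is, since only the top-degree part changes), and (iii) check exactness of the reconstructed sequence — the only nontrivial spot being where $M'$ enters, which follows from the snake-type diagram chase comparing $\ker$ and $\text{coker}$ across the retraction. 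All of this is routine diagram-chasing in the abelian category $\mathfrak{g}\textbf{-mod}$ once the right objects are written down; since, as the authors note, it parallels \cite{baues-minian} verbatim in the portions that do not involve the crossed-module axiom, and that axiom only ever concerns the two terms $M_1 \xrightarrow{\partial_1} L$ which are never modified by $\alpha_*$, $\beta_*$, or $\delta$, the Lie-algebra case introduces no new difficulty, and I would simply indicate these points and refer to \cite{baues-minian} for the remaining details.
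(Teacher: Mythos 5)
Your outline is, in structure, exactly what the paper intends: the paper in fact gives no proof of Theorem~\ref{short-long-seq} at all, saying only that the argument is similar to Baues--Minian and omitting every detail, so your position-by-position exactness check, with triviality of composites detected through the retraction criterion of Remark~\ref{cross-rep-zero}, is more explicit than anything in the text. Two of your three ``composite is zero'' arguments are correct as stated: for $\beta_*\alpha_*=(\beta\alpha)_*=0_*$ the pushout along the zero map is $M''\oplus \mathrm{coker}(f)$ and the first projection retracts the leftmost map; for $\alpha_*\delta=0$ the codiagonal $(M'\oplus M')/\{(\alpha m,-\alpha m)\}\rightarrow M'$, $(a,b)\mapsto a+b$, is the required $\mathfrak{g}$-module retraction.

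The step that fails as written is your justification of $\delta\beta_*=0$. The extension $\delta(\beta_*\mathcal{E})$ begins $0\rightarrow M\xrightarrow{\alpha}M'\rightarrow \overline{M_{n-1}}\rightarrow\cdots$, so its leftmost map is $\alpha$ itself, and a $\mathfrak{g}$-module retraction of $\alpha$ exists if and only if the coefficient sequence $0\rightarrow M\rightarrow M'\rightarrow M''\rightarrow 0$ splits --- in which case the theorem is vacuous. Neither $\beta\alpha=0$ nor the universal property of the pushout produces such a retraction, so Remark~\ref{cross-rep-zero} does not apply directly to $\delta\beta_*\mathcal{E}$. The correct mechanism is a zigzag of morphisms of $(n+1)$-fold extensions realizing the interchange law $\sigma\circ(\beta_*\mathcal{E})\equiv(\beta^*\sigma)\circ\mathcal{E}$, where $\sigma$ denotes the coefficient sequence viewed as a $1$-extension of $M''$ by $M$; since $\beta^*\sigma$ is the pullback $0\rightarrow M\rightarrow M'\times_{M''}M'\rightarrow M'\rightarrow 0$, which splits via the diagonal, the resulting representative $0\rightarrow M\rightarrow M'\times_{M''}M'\rightarrow M_{n-1}\rightarrow\cdots$ \emph{does} admit a retraction of its leftmost map, namely $(a,b)\mapsto\alpha^{-1}(a-b)$, and only then does Remark~\ref{cross-rep-zero} finish the job. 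The same omission resurfaces in your converse direction at $\text{Opext}^n(\mathfrak{g},M'')$, where the zero-representative you are handed is connected to $\delta\mathcal{F}$ only by such a zigzag, not by a single morphism. This interchange argument is the genuinely nontrivial content of the theorem and is precisely what Baues--Minian spell out; it should be carried out rather than compressed to ``a retraction of $\alpha$.''
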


Now we are in a position to prove our main theorem.
\begin{thm}
For any $n \geq 2$, there exists an isomorphism of abelian groups
\begin{align*}
\text{Opext}^n (\mathfrak{g}, M) \cong H^{n+1}_{CE} (\mathfrak{g}, M).
\end{align*}
\end{thm}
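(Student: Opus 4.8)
The plan is to prove the isomorphism $\text{Opext}^n (\mathfrak{g}, M) \cong H^{n+1}_{CE} (\mathfrak{g}, M)$ by induction on $n$, using dimension shifting along the long exact sequence from Theorem \ref{short-long-seq}. The base case $n = 2$ is exactly Theorem \ref{cross-3}, which gives a bijection $\text{Cross} (\mathfrak{g}, M) = \text{Opext}^2 (\mathfrak{g}, M) \cong H^3_{CE} (\mathfrak{g}, M)$; one should also check that this bijection is additive for the Baer sum structure described in the Remark following Theorem \ref{thm-abelian-grp}, i.e.\ that $\psi$ carries Baer sum to addition of cohomology classes. This additivity is routine once one writes both structures in terms of cocycles $\theta_{\mathcal{E}, s, q}$ and uses that choosing a product section $s \times s'$ on $L \times_{\mathfrak{g}} L'$ makes the cocycle of the sum the sum of the cocycles.

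For the inductive step, suppose $\text{Opext}^n (\mathfrak{g}, N) \cong H^{n+1}_{CE} (\mathfrak{g}, N)$ holds naturally in the $\mathfrak{g}$-module $N$, and fix $M$. Since $\mathfrak{g}\text{-mod}$ has enough injectives, embed $M$ into an injective module: $0 \rightarrow M \xrightarrow{\alpha} I \xrightarrow{\beta} M'' \rightarrow 0$ with $I$ injective. Feeding this into Theorem \ref{short-long-seq} gives the exact sequence
\begin{align*}
\text{Opext}^n (\mathfrak{g}, I) \xrightarrow{\beta_*} \text{Opext}^n (\mathfrak{g}, M'') \xrightarrow{\delta} \text{Opext}^{n+1} (\mathfrak{g}, M) \xrightarrow{\alpha_*} \text{Opext}^{n+1} (\mathfrak{g}, I),
\end{align*}
while the Chevalley--Eilenberg long exact sequence gives
\begin{align*}
H^{n+1}_{CE} (\mathfrak{g}, I) \rightarrow H^{n+1}_{CE} (\mathfrak{g}, M'') \xrightarrow{\partial} H^{n+2}_{CE} (\mathfrak{g}, M) \rightarrow H^{n+2}_{CE} (\mathfrak{g}, I).
\end{align*}
The key point is that $\text{Opext}^{k} (\mathfrak{g}, I) = 0$ for all $k \geq 2$: an extension with last term $I$ injective splits, in the sense that the map $M = I \xrightarrow{f} M_{k-1}$ admits a $\mathfrak{g}$-module retraction (for $k = 2$ one argues directly that a crossed module with kernel an injective module is equivalent to the trivial one), so Remark \ref{cross-rep-zero} applies. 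Hence both $\text{Opext}^{n+1}(\mathfrak{g}, M)$ and $H^{n+2}_{CE}(\mathfrak{g}, M)$ are cokernels of the respective maps out of the $M''$-term, and $\delta$, $\partial$ are surjective. Comparing via the inductive isomorphism $\text{Opext}^n (\mathfrak{g}, M'') \cong H^{n+1}_{CE} (\mathfrak{g}, M'')$ and the induced isomorphism on the $I$-terms (both being zero on the $\text{Opext}$ side, and the $H^{n+1}_{CE}(\mathfrak{g},I)$, $H^{n+2}_{CE}(\mathfrak{g},I)$ terms possibly nonzero but irrelevant because the relevant maps are the surjections $\delta$, $\partial$) yields $\text{Opext}^{n+1}(\mathfrak{g}, M) \cong H^{n+2}_{CE}(\mathfrak{g}, M)$.

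To make the comparison precise one wants a commuting square: the inductive isomorphism $\text{Opext}^n (\mathfrak{g}, M'') \cong H^{n+1}_{CE}(\mathfrak{g}, M'')$ followed by $\partial$ should agree with $\delta$ followed by the (yet-to-be-established) map $\text{Opext}^{n+1}(\mathfrak{g}, M) \to H^{n+2}_{CE}(\mathfrak{g}, M)$; running this the other way, one \emph{defines} the $(n+1)$-level map by $\delta \mathcal{E} \mapsto \partial[\text{class of } \mathcal{E}]$ and checks it is well-defined and independent of the chosen injective resolution, exactly as in the surjectivity argument for $\psi$ given after Theorem \ref{cross-3}. The main obstacle, and the place where care is genuinely needed, is the naturality/compatibility bookkeeping: verifying that $\delta$ commutes with the cohomology connecting homomorphism $\partial$ under the dimension-shift isomorphisms, and that $\alpha_*$ on $\text{Opext}$ corresponds to $\alpha_*$ on $H^\bullet_{CE}$ — essentially the commutativity of the ladder between the two long exact sequences. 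Once that ladder commutes, the five lemma (or rather the four-term exactness with vanishing outer terms) finishes the induction. This is precisely the structure of the Baues--Minian argument for Hochschild cohomology, so I would cite \cite{baues-minian} for the formal pattern and only spell out the points where the Lie-algebraic input (the crossed module identity, injectivity of the category $\mathfrak{g}\text{-mod}$) enters.
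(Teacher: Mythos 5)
Your proposal follows essentially the same route as the paper: base case $n=2$ from Theorem \ref{cross-3}, then dimension shifting via an embedding of $M$ into an injective $\mathfrak{g}$-module and the long exact sequence of Theorem \ref{short-long-seq}. In fact you are more careful than the paper, which silently uses the vanishing of $\text{Opext}^k(\mathfrak{g}, I)$ for injective $I$, the additivity of $\psi$, and the compatibility of the two connecting homomorphisms --- all points you correctly flag as needing verification.
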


\begin{proof}
The result is true for $n = 2$ (see Theorem \ref{cross-3}). Let $n \geq 3$. We consider a short exact sequence
\begin{align*}
0 \rightarrow M \xrightarrow{\alpha} M'  \xrightarrow{\beta} M'' \rightarrow 0
\end{align*}
of $\mathfrak{g}$-modules with $M'$ injective. Then by Theorem \ref{short-long-seq}, it follows that
\begin{align*}
\text{Opext}^n (\mathfrak{g}, M'') \cong \text{Opext}^{n+1} (\mathfrak{g}, M).
\end{align*}
Moreover, it follows from the cohomology long exact exact sequence that $H^{n+1}_{CE} (\mathfrak{g}, M'') \cong H^{n+2}_{CE} (\mathfrak{g}, M)$. Therefore,
\begin{align*}
\text{Opext}^3 (\mathfrak{g}, M) \cong~ \text{Opext}^2 (\mathfrak{g}, M'') 
\cong~& H^3_{CE} (\mathfrak{g}, M'') \qquad(\text{by Theorem }\ref{cross-3})\\
\cong~& H^4_{CE} (\mathfrak{g}, M).
\end{align*}
We conclude the result by using the induction on $n$.
\end{proof}

\begin{remark}
Lie-Rinehart algebras are algebraic analogue of Lie algebroids and closely related to Lie algebras. These algebras pay special attention due to its connection Poisson geometry \cite{hueb2}. In \cite{casas-ladra-pira} the authors studied crossed modules for Lie-Rinehart algebras and classify them by the third cohomology of Lie-Rinehart algebras. However, their crossed modules are similar to the traditional one for Lie algebras. It would be interestiong to study crossed extensions of Lie-Rinehart algebras and their classification in terms of higher cohomologies of Lie-Rinehart algebras.
\end{remark}

\section{The case of a Leibniz algebra}
Leibniz algebra was introduced by Loday in connection with cyclic homology and Hochschild homology of matrix algebras \cite{loday}. The cohomology theory of Liebniz algebras was introduced by the same author. In this section, we outline that Leibniz cohomology groups are also represented by crossed extensions of Leibniz algebras.

A (right) Leibniz algebra is a $\mathbb{K}$-vector space $\mathfrak{h}$ together with a $\mathbb{K}$-bilinear map $[~,~] : \mathfrak{h} \times \mathfrak{h} \rightarrow \mathfrak{h}$ satisfying
\begin{align*}
[x,[y,z]] = [[x,y],z] - [[x,z],y], ~~~ \text{ for all } x, y, z \in \mathfrak{h}.
\end{align*}

Let $(\mathfrak{h}, [~,~])$ be a Leibniz algebra. An $\mathfrak{h}$-module is a vector space $M$ together with two bilinear maps (both of them denoted by $[~,~]$) $\mathfrak{h} \times M \rightarrow M$ and $M \times \mathfrak{h} \rightarrow M$ satisfying
\begin{align*}
[x,[y,z]] = [[x,y],z] - [[x,z],y],
\end{align*}
whenever one of the variable is from $M$ and the others two are from $\mathfrak{h}$.

The cohomology of the Leibniz algebra $\mathfrak{h}$ with coefficients in $M$  is the cohomology of the complex $(C^n_{Leib} (\mathfrak{h}, M), \delta)_{n \geq 0}$ where $C^0_{Leib} (\mathfrak{h}, M) = M$ and $C^n_{Leib} (\mathfrak{h}, M) = Hom_{\mathbb{K}} (\mathfrak{h}^{\otimes n} , M)$, for $n \geq 1$. The differential $\delta$ is given by $(\delta m)(x) = [x,m]$, for $m \in M$, $x \in \mathfrak{h}$ and
\begin{align*}
(\delta f) (x_1, \ldots, x_{n+1}) =~& [x_1, f(x_2, \ldots, x_{n+1}) ] + \sum_{i=2}^{n+1} (-1)^i [ f(x_1, \ldots, \widehat{x_i}, \ldots, x_{n+1}), x_i ] \\
~& + \sum_{1 \leq i < j \leq n+1} (-1)^{j+1} f(x_1, \ldots, x_{i-1}, [x_i, x_j], x_{i+1}, \ldots, \widehat{x_j}, \ldots, x_n).
\end{align*}
Similar to the case of Chevalley-Eilenberg cohomology, the lower degree Leibniz cohomology has the following interpretations. The zero-th cohomology group $H^0_{Leib} (\mathfrak{h}, M)$ is the submodule of invariants on $M$ and the first cohomology group
\begin{align*}
H^1_{Leib} (\mathfrak{h}, M) = \frac{\{ f : \mathfrak{h} \rightarrow M|~ f[x,y] = [x,f(y)] + [f(x), y], \forall x , y \in \mathfrak{h}\}}{\{\delta m |~ m \in M \}}
\end{align*}
is the space of outer derivations. The second cohomology group $H^2_{Leib} (\mathfrak{h}, M)$ classifies the equivalence classes of extensions of the Leibniz algebra $\mathfrak{h}$ by $M$.

Crossed modules over a Leibniz algebra can be defined in a similar way. One only needs to careful about non skew-symmetric version of the identities used in the case of Lie algebra.

A crossed module over a Leibniz algebra is a triple $(V, L, \partial)$ in which $L$ is a Leibniz algebra, $V$ is a $L$-module and $\partial : V \rightarrow L$ is a map of $L$-modules satisfying
\begin{align*}
[\partial v, w ] = [ v, \partial w], ~~~~ \text{ for all } v, w \in L.
\end{align*}
In a similar way, a crossed module yields a Leibniz algebra structure on $\mathfrak{h}= \text{coker} (\partial)$ and there is an exact sequence
\begin{align*}
0 \rightarrow M \rightarrow V \xrightarrow{\partial} L \xrightarrow{\pi} \mathfrak{h} \rightarrow 0
\end{align*}
where $M = \text{ker}(\partial)$. Moreover, there is a Leibniz algebra action of $\mathfrak{h}$ on $M$. A morphism of crossed modules over Leibniz algebras can be defined in a similar way.

In this case, one can also prove that equivalence classes of crossed modules with cokernel $\mathfrak{h}$ and kernel $M$ are in one-to-one correspondence with $H^3_{\text{Leib}} (\mathfrak{h}, M)$. The proof is similar to the Lie algebra case (Theorem \ref{cross-3}). However, in this case, the Leibniz $3$-cocycle $\theta_{\mathcal{E}, s, q}: \mathfrak{h}^{\otimes 3} \rightarrow M$ is given by
\begin{align*}
\theta_{\mathcal{E}, s, q} (x,y,z) = [s(x), g (y,z)] + [ g(x,z), s(y)] - [g(x,y), s(z)] - g([x,y], z) + g([x,z], y) + g (x, [y,z]).
\end{align*}
Associated to any third cohomology class in $H^3_{\text{Leib}} (\mathfrak{h}, M)$ the existence of the corresponding crossed module can be shown by the way that have been described in Theorem \ref{cross-3}.

Moreover, one can define crossed $n$-fold extension of a Leibniz algebra $\mathfrak{h}$ by a module $M$. Along the lines of Section \ref{sec-4}, we obtain the following theorem for Leibniz algebras.

\begin{thm}
The set $\text{Opext}^n (\mathfrak{h}, M)$ of equivalence classes of crossed $n$-fold extensions of $\mathfrak{h}$ by $M$ forms an abelian group, for $n \geq 2$. Moreover, there exists an isomorphism of abelian groups $\text{Opext}^n (\mathfrak{h}, M) \cong H^{n+1}_{\text{Leib}} (\mathfrak{h}, M).$
\end{thm}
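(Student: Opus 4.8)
The plan is to transport the entire development of Section \ref{sec-4} to the Leibniz setting, replacing the skew-symmetric identities by their non-symmetric counterparts, and then to run the same dimension-shifting induction used in the proof of the Lie algebra theorem.

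First I would equip $\text{Opext}^n(\mathfrak{h}, M)$ with an abelian group structure. The category of Leibniz $\mathfrak{h}$-modules is equivalent to the category of left modules over the universal enveloping associative algebra $UL(\mathfrak{h})$, so it is abelian, admits pushouts computed exactly as in the discussion preceding Section \ref{sec-4}, and has enough injectives. Using pushouts I would define $\alpha\mathcal{E}$ for an $\mathfrak{h}$-module map $\alpha : M \to M'$ by a pushout at the $M_{n-1}$-spot, obtaining the Leibniz analogue of Proposition \ref{cross-induced-by-morphism} and hence a well-defined induced map $\alpha_* : \text{Opext}^n(\mathfrak{h}, M) \to \text{Opext}^n(\mathfrak{h}, M')$. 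The fibered-sum extension $\mathcal{E} \oplus_\mathfrak{h} \mathcal{E}'$ is built from the pullback Leibniz algebra $L \times_\mathfrak{h} L'$ with componentwise bracket and action; here one checks that $(M_1 \oplus M_1', L \times_\mathfrak{h} L', (\partial_1, \partial_1'))$ satisfies the Leibniz crossed-module identity $[\partial v, w] = [v, \partial w]$, which holds componentwise. The Baer sum $\mathcal{E} + \mathcal{E}' := \nabla_M(\mathcal{E} \oplus_\mathfrak{h} \mathcal{E}')$, the zero extension, and the inverse $-\mathcal{E}$ obtained by negating $f$ then make $\text{Opext}^n(\mathfrak{h}, M)$ an abelian group for $n \geq 3$, and $\alpha_*$ a group homomorphism; the verifications are the same bookkeeping as in Theorem \ref{thm-abelian-grp}, using a retraction of the type in Remark \ref{cross-rep-zero} to recognise the neutral element. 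For $n = 2$, $\text{Opext}^2(\mathfrak{h}, M) = \text{Cross}(\mathfrak{h}, M)$ carries the abelian group structure coming from the crossed-module Baer sum described in the Lie case, again adapted by the substitution $[\partial v, w] = [v, \partial w]$.

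Next I would establish the long exact sequence attached to a short exact sequence $0 \to M \xrightarrow{\alpha} M' \xrightarrow{\beta} M'' \to 0$ of $\mathfrak{h}$-modules: the connecting homomorphism $\delta : \text{Opext}^n(\mathfrak{h}, M'') \to \text{Opext}^{n+1}(\mathfrak{h}, M)$ sends an extension with kernel $M''$ ending $\cdots \to M_1 \to L \to \mathfrak{h} \to 0$ to the spliced extension $0 \to M \xrightarrow{\alpha} M' \xrightarrow{f\beta} M_{n-1} \to \cdots \to L \to \mathfrak{h} \to 0$, and I would prove exactness of
\[
\cdots \to \text{Opext}^n(\mathfrak{h}, M) \xrightarrow{\alpha_*} \text{Opext}^n(\mathfrak{h}, M') \xrightarrow{\beta_*} \text{Opext}^n(\mathfrak{h}, M'') \xrightarrow{\delta} \text{Opext}^{n+1}(\mathfrak{h}, M) \to \cdots
\]
at every term. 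This is the Leibniz analogue of Theorem \ref{short-long-seq}; following \cite{baues-minian}, the ingredients are that $\delta$ is well-defined (independence of the splitting), that $\beta_*\alpha_* = 0$ and $\delta\beta_* = 0$ (each spliced extension admits a retraction as in Remark \ref{cross-rep-zero}), and that the reverse inclusions hold, proved by the same pushout-and-splicing manipulations as in the associative case.

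Finally I would run the induction. The base case $n = 2$ is the Leibniz analogue of Theorem \ref{cross-3} asserted in the text: $\text{Opext}^2(\mathfrak{h}, M) \cong H^3_{\text{Leib}}(\mathfrak{h}, M)$ via the class of the $3$-cocycle $\theta_{\mathcal{E}, s, q}$ displayed above, with surjectivity obtained by the Yoneda-product construction using an injective module. For $n \geq 3$, embed $M$ into an injective $\mathfrak{h}$-module $M'$ with cokernel $M''$; since $M'$ is injective, the Opext long exact sequence degenerates to an isomorphism $\text{Opext}^n(\mathfrak{h}, M'') \cong \text{Opext}^{n+1}(\mathfrak{h}, M)$, while the Leibniz cohomology long exact sequence gives $H^{n+1}_{\text{Leib}}(\mathfrak{h}, M'') \cong H^{n+2}_{\text{Leib}}(\mathfrak{h}, M)$; chaining these with the inductive hypothesis for $M''$ yields the claim. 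I expect the exactness of the Opext long exact sequence to be the main obstacle — establishing that $\delta$ is well-defined and that all four inclusions hold — since this is where the non-skew-symmetric crossed-module identity and the precise form of the splicing must be handled with care; once that is in place, the abelian-group verifications and the induction are routine transcriptions of the Lie algebra arguments.
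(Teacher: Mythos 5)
Your proposal is correct and follows essentially the same route as the paper, which itself only sketches this theorem by saying the argument runs ``along the lines of Section \ref{sec-4}'': the base case $n=2$ via the displayed Leibniz $3$-cocycle $\theta_{\mathcal{E},s,q}$, the abelian group structure via pushouts and Baer sums, the Opext long exact sequence, and dimension shifting by embedding $M$ into an injective module. Your added remark that Leibniz $\mathfrak{h}$-modules form a module category over a universal enveloping algebra (hence have pushouts and enough injectives) is a useful explicit justification of a point the paper leaves implicit.
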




\medskip

\noindent {\bf Acknowledgement.} The research is supported by the fellowship
of Indian Institute of Technology, Kanpur (India). The author would like to thank the Institute for their support.

\end{document}